\newtheorem{lem}{Lemma}
\newtheorem{thm}[lem]{Theorem}
\theoremstyle{definition}
\newtheorem{rem}[lem]{Remark}
\newcommand{\ca}{\mathcal{A}}
\newcommand{\cN}{\mathcal{N}}
\newcommand{\cR}{\mathcal{R}}
\newcommand{\CC}{\mathbf{C}}
\newcommand{\ZZ}{\mathbf{Z}}
\newcommand{\matX}{\textsl{Mat}_X(\CC)}
\newcommand{\mats}{\textsl{Mat}_X(\CC^{*})}
\DeclareMathOperator{\Span}{span}
\begin{document}

\title{Nomura algebras of nonsymmetric Hadamard models}

\author{Takuya Ikuta}
\address{Kobe Gakuin University,
Chuo-ku, Kobe, 650-8586 Japan}
\email{ikuta@law.kobegakuin.ac.jp}

\author{Akihiro Munemasa}
\address{Graduate School of Information Sciences, Tohoku University,
Sendai 980-8579, Japan}
\email{munemasa@math.is.tohoku.ac.jp}

\begin{abstract}
We show that the Nomura algebra of the nonsymmetric Hadamard
model coincides with the Bose--Mesner algebra of the directed
Hadamard graph.
\end{abstract}

\date{October 25, 2011}
\maketitle

\section{Introduction}
Spin models for link invariants were introduced by Jones \cite{Jo:pac},
and their connection to combinatorics was revealed first in \cite{J}.
Jaeger and Nomura \cite{JN} constructed nonsymmetric spin models for 
link invariants from Hadamard matrices, and showed that these models 
give link invariants which depend nontrivially on link orientation. 
These models are a modification of the Hadamard model originally 
constructed by Nomura \cite{N:94}. Jaeger and Nomura also pointed out a 
similarity between the association scheme of a Hadamard graph and 
the association scheme containing their new nonsymmetric spin model.

Nomura \cite{N:97}, and later
Jaeger, Matsumoto and Nomura \cite{JMN}
introduced an algebra called the Nomura algebra of a type~II matrix $W$,
and showed that 
this algebra coincides with 
the Bose--Mesner algebra of some self-dual association schemes
when $W$ is a spin model.
By \cite{JMN} the Nomura algebra of the Hadamard model coincides
with the Bose--Mesner algebra of the corresponding Hadamard graph.

The purpose of this paper is to 
determine the Nomura algebra of
a nonsymmetric Hadamard model to be the Bose--Mesner algebra
of the corresponding directed Hadamard graph.
We also show that the directed Hadamard graph can be constructed
from the ordinary Hadamard graphs by means of a general method
given by Klin, Muzychuk, Pech, Woldar and Zieschang \cite{KMPWZ}.

\section{Preliminaries on Nomura algebras}

Let $X$ be a finite set with $k$ elements.
We denote by $\matX$ the algebra of square matrices with complex entries
whose rows and columns are indexed by $X$.
We also denote by $\mats$ the subset of $\matX$ consisting of matrices 
all of whose entries are nonzero.
For $W \in \matX$ and $x,y \in X$, the $(x,y)$-entry of $W$ is denoted by $W(x,y)$.

A {\em type~II matrix\/} is a matrix $W\in\mats$ 
which satisfies the {\em type~II condition}:

\begin{equation}  \label{type2}
   \sum_{x \in X} \frac{W(a,x)}{W(b,x)} = k \delta_{a,b}
   \qquad (\mbox{for all $a, b \in X$}).
\end{equation}
For a type~II matrix $W\in\mats$ and $a,b\in X$, 
we define a column vector $Y_{ab}\in\CC^X$ 
whose $x$-entry is given by
\[
Y_{ab}(x)=\frac{W(x,a)}{W(x,b)}.
\]
The Nomura algebra ${\cN}(W)$ of $W$ is defined by
\[
{\cN}(W)=\{ M\in \textsl{Mat}_n(\CC^{*}) \mid Y_{ab} \ \text{is an eigenvector for $M$ for all $a,b \in X$} \}.
\]
A type~II matrix $W$ is called a {\em spin model} if it satisfies the
{\em type~III condition}:
\[
\sum_{x\in X} \frac{W(a,x)W(b,x)}{W(c,x)}=d\frac{W(a,b)}{W(a,c)W(c,b)}
\qquad (\mbox{for all $a, b, c \in X$}),
\]
where $d^2=k$.
It is shown in \cite[Theorem 11]{JMN}
that if $W$ is a spin model, then ${\cN}(W)$ is the Bose-Mesner algebra
of some self-dual association scheme.

We shall associate with $W$ an undirected graph $G$ on the vertex set $X\times X$.
Given two column vectors $T$, $T'$ indexed by $X$, we write 
$\langle T,T' \rangle$ for their Hermitian scalar product
$\sum_{x\in X} T(x)\overline{T'(x)}$.
Two distinct vertices $(a,b), (c,d)$ will be adjacent in $G$ iff $\langle Y_{ab}, Y_{cd} \rangle\neq 0$.
For a subset $C$ of $X\times X$,
we denote by $A(C)$ the matrix in $\matX$ with $(a,b)$-entry equal to $1$
if $(a,b)\in C$ and to $0$ otherwise.
Then we have the following:

\begin{thm}{\rm \cite[Theorem 5(iii)]{JMN}} \label{thm-0807}
Let $C_1, \ldots ,C_p$ be the connected components of $G$.
Then the algebra ${\cN}(W^T)$ has a basis $\{A(C_i) \mid i=1,\ldots, p\}$.
\end{thm}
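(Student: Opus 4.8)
The plan is to route everything through the linear map $\Theta_W$ that records eigenvalues. For $M\in\cN(W)$ and $a,b\in X$ write $MY_{ab}=\theta_M(a,b)Y_{ab}$; the scalar $\theta_M(a,b)$ is well defined because, for fixed $b$, the matrix whose columns are the $Y_{ab}$ $(a\in X)$ is obtained from $W$ by scaling each row by a nonzero constant and hence is invertible, so the $Y_{ab}$ span $\CC^X$. Then $\Theta_W\colon M\mapsto(\theta_M(a,b))_{a,b\in X}$ is an injective linear map $\cN(W)\to\matX$ satisfying $\Theta_W(MM')=\Theta_W(M)\circ\Theta_W(M')$ (so $\cN(W)$ is closed under matrix multiplication, and, as $\Theta_W$ embeds $(\cN(W),\cdot)$ into the commutative ring $(\matX,\circ)$, the algebra $\cN(W)$ is commutative), $\Theta_W(I)=J$, and $\Theta_W(J)=kI$ by the type~II condition~\eqref{type2}. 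The theorem reduces to two assertions: (a) $\Theta_W(\cN(W))=\cN(W^T)$; and (b) the resulting basis of $\cN(W^T)$ consisting of $\{0,1\}$-matrices is indexed by the connected components of $G$.

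For (a), a direct computation with~\eqref{type2} shows that for $M\in\cN(W)$ the matrix $N=\Theta_W(M)$ satisfies $NY^{W^T}_{cd}=kM(d,c)\,Y^{W^T}_{cd}$ for all $c,d$, where $Y^{W^T}_{cd}(x)=W(c,x)/W(d,x)$; hence $N\in\cN(W^T)$, so $\Theta_W(\cN(W))\subseteq\cN(W^T)$, and moreover $\Theta_{W^T}\circ\Theta_W$ equals $k$ times transposition on $\cN(W)$. Applying the same inclusion with $W^T$ in place of $W$ and combining it with this identity yields the reverse inclusion $\cN(W^T)\subseteq\Theta_W(\cN(W))$ (and, along the way, that $\cN(W^T)$ is closed under transposition). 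Thus $\cN(W^T)$ is a unital Hadamard-closed subalgebra of $(\matX,\circ)\cong\CC^{X\times X}$; since this ring is reduced, $\cN(W^T)$ has a unique basis of $\{0,1\}$-matrices $A(D_1),\dots,A(D_p)$ whose supports partition $X\times X$ and sum to $J$, and the $D_i$ are exactly the minimal nonempty subsets of $X\times X$ with $A(D_i)\in\cN(W^T)$.

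For (b) it suffices to prove two facts: first, $A(C)\in\cN(W^T)$ for every connected component $C$ of $G$; second, if $A(D)\in\cN(W^T)$ then $\langle Y_{ab},Y_{cd}\rangle=0$ whenever $(a,b)\in D$ and $(c,d)\notin D$---that is, no edge of $G$ joins a vertex of $D$ to a vertex outside $D$. From the second fact (applied to $D=D_i$) each $D_i$ is a union of connected components of $G$; from the first fact and the minimality of $D_i$ it cannot contain two or more, so each $D_i$ is a single component; since both the $D_i$ and the connected components partition $X\times X$, the two partitions coincide, which is the assertion. Both facts are proved by converting the eigenvector relation $A(D)Y^{W^T}_{ab}=\mu_{ab}Y^{W^T}_{ab}$ into statements about the inner products $\langle Y_{cd},Y_{ef}\rangle$ by means of~\eqref{type2}: for the first, one checks the eigenvector condition directly on a single component, using that $\langle Y_{cd},Y_{ef}\rangle=0$ when $(c,d)$ and $(e,f)$ lie in different components; for the second, one pairs the eigenvector relation against suitable vectors and reads off the vanishing of $\langle Y_{ab},Y_{cd}\rangle$.

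The crux---and the step I expect to be the main obstacle---is precisely this last passage between the ``$W$-side'' and the ``$W^T$-side''. The graph $G$ is defined purely through the non-vanishing of the Hermitian products $\langle Y_{ab},Y_{cd}\rangle$ of the eigenvectors of $\cN(W)$, whereas membership in $\cN(W^T)$ is an eigenvector condition for the other family $Y^{W^T}_{cd}$. Even though $\cN(W)$ turns out to be closed under transposition, it need not be closed under entrywise conjugation, so its common eigenspaces need not be mutually orthogonal and one cannot simply invoke spectral orthogonality to identify the two descriptions; the bridge must be built by hand from the type~II identity~\eqref{type2}, and carrying out both of the above verifications in that way is where the substantive work lies.
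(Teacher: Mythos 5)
This statement is quoted from \cite[Theorem 5(iii)]{JMN}; the paper offers no proof of its own, so your reconstruction can only be measured against the argument there. Your overall architecture is the right one and matches JMN: the duality map $\Theta_W$, the inclusion $\Theta_W(\cN(W))\subseteq\cN(W^T)$ together with $\Theta_{W^T}\circ\Theta_W=k\cdot{}^T$, the $0$--$1$ basis of $\cN(W^T)$ obtained from Hadamard-closure and unitality, and the reduction to your two ``facts''. Your first fact is also genuinely easy: the orthogonal projection onto $\Span\{Y_{ab}\mid(a,b)\in C\}$ fixes every $Y_{ab}$ with $(a,b)\in C$ and annihilates every $Y_{cd}$ with $(c,d)\notin C$ (such $Y_{cd}$ are orthogonal to the span because distinct components carry no edges), so it lies in $\cN(W)$ and its image under $\Theta_W$ is $A(C)$; no further use of \eqref{type2} is needed.

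The gap is your second fact, which you yourself flag as ``the main obstacle'': you never exhibit the ``suitable vectors'' to pair against, and it is not clear that any direct manipulation of \eqref{type2} can produce the \emph{Hermitian} orthogonality $\langle Y_{ab},Y_{cd}\rangle=0$, since \eqref{type2} says nothing about the complex conjugates $\overline{W(x,y)}$. Moreover, the reason you give for setting aside spectral orthogonality is mistaken. Apply your own part (a) to the type~II matrix $W^T$: then $\cN(W)=\Theta_{W^T}(\cN(W^T))$ is itself a unital Hadamard-closed subalgebra of $(\matX,\circ)$, hence spanned by $0$--$1$ matrices, hence closed under entrywise conjugation, and (being transpose-closed as well) closed under $M\mapsto M^{*}=\overline{M}^{\,T}$. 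Commutativity of $\cN(W)$ then makes every $M\in\cN(W)$ normal, and eigenvectors of a normal matrix belonging to distinct eigenvalues are orthogonal for the Hermitian product. Taking $M$ with $\Theta_W(M)=A(D)$, so that $MY_{ab}=Y_{ab}$ for $(a,b)\in D$ and $MY_{cd}=0$ for $(c,d)\notin D$, this yields $\langle Y_{ab},Y_{cd}\rangle=0$ immediately. This is exactly how JMN proceed (they establish that $\cN(W)$ is a Bose--Mesner algebra before proving Theorem 5), so your missing step is closable --- but by the tool you dismissed, not by the unspecified computation you promise.
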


\section{Hadamard graphs and directed Hadamard graphs}
In this section, we define the adjacency matrices of Hadamard graphs
and directed Hadamard graphs, and give the association schemes determined by them.
We refer the reader to \cite[Theorem~1.8.1]{BCN}
for properties of Hadamard graphs,
and to \cite{BI} for background materials in the theory of association schemes.

Let $k$ be a positive integer, and
let $H\in\matX$ be a Hadamard matrix of order $k$.
We denote by $I_n$ the identity matrix of order $n$,
and we omit $n$ if $n=k$.
Let $J\in\matX$ be the all $1$'s matrix.
We define
\begin{align*}
A_0&=I_{4k},\displaybreak[0]\\
A_1&=
\begin{bmatrix}
0&0&\frac12(J+H)&\frac12(J-H)\\
0&0&\frac12(J-H)&\frac12(J+H)\\
\frac12(J+H^T)&\frac12(J-H^T)&0&0\\
\frac12(J-H^T)&\frac12(J+H^T)&0&0
\end{bmatrix},\displaybreak[0]\\
A_2&=
\begin{bmatrix}
J-I&J-I&0&0\\
J-I&J-I&0&0\\
0&0&J-I&J-I\\
0&0&J-I&J-I
\end{bmatrix},\displaybreak[0]\\
A_3&=\begin{bmatrix}
0&0&\frac12(J-H)&\frac12(J+H)\\
0&0&\frac12(J+H)&\frac12(J-H)\\
\frac12(J-H^T)&\frac12(J+H^T)&0&0\\
\frac12(J+H^T)&\frac12(J-H^T)&0&0
\end{bmatrix},\displaybreak[0]\\
A_4&=
\begin{bmatrix}
0&I&0&0\\
I&0&0&0\\
0&0&0&I\\
0&0&I&0
\end{bmatrix}.
\end{align*}
The matrices $\{A_i\}_{i=0}^4$ are the distance matrices of the Hadamard graph
associated to the Hadamard matrix $H$.
Since the Hadamard graph is distance-regular (see \cite[Theorem 1.8.1]{BCN}),

\begin{equation}  \label{alg}
\ca=\Span\{A_0,A_1,A_2,A_3,A_4\}
\end{equation}
is closed under the matrix multiplication.
This algebra is called the Bose--Mesner algebra of the Hadamard graph.

The directed Hadamard graph associated with a Hadamard matrix $H$ 
is the digraph with adjacency matrix
\[
A'_1=
\begin{bmatrix}
0&0&\frac12(J+H)&\frac12(J-H)\\
0&0&\frac12(J-H)&\frac12(J+H)\\
\frac12(J-H^T)&\frac12(J+H^T)&0&0\\
\frac12(J+H^T)&\frac12(J-H^T)&0&0
\end{bmatrix}.\]
The matrix $A'_1$ generates a Bose--Mesner algebra $\ca'$ with basis
$A_0, A'_1, A_2, A'_3={A'_1}^T, A_4$ (see \cite{JN}).
We shall index the rows and columns of the matrices $A_1$ and $A'_1$ by $X\times (\ZZ/{2\ZZ})^2$,
in the order $X\times\{(0,0)\}$, $X\times\{(0,1)\}$, $X\times\{(1,0)\}$, $X\times\{(1,1)\}$.
Then $\ca$ is the Bose--Mesner algebra of the association scheme
on $X\times (\ZZ/{2\ZZ})^2$ with relations
\begin{align*}
R_0 =& \{((a,\alpha),(a,\alpha)) \mid (a,\alpha)\in X\times (\ZZ/{2\ZZ})^2 \}, \\
R_1 =& \{((a,\alpha),(b,\beta)) \mid (\alpha_1,\beta_1)=(0,1), H(a,b)=(-1)^{\alpha_2+\beta_2} \} \\
     &  \cup \{((a,\alpha),(b,\beta)) \mid (\alpha_1,\beta_1)=(1,0), H(b,a)=(-1)^{\alpha_2+\beta_2} \}, \\
R_2 =& \{((a,\alpha),(b,\beta)) \mid \alpha_1=\beta_1, a \neq b \}, \\
R_3 =& \{((a,\alpha),(b,\beta)) \mid (\alpha_1,\beta_1)=(0,1), H(a,b)=(-1)^{\alpha_2+\beta_2+1} \} \\
     &  \cup \{((a,\alpha),(b,\beta)) \mid (\alpha_1,\beta_1)=(1,0), H(b,a)=(-1)^{\alpha_2+\beta_2+1} \}, \\
R_4 =& \{((a,\alpha),(a,\beta)) \mid \alpha_1=\beta_1, \alpha_2\neq\beta_2 \}.
\end{align*}
Also, $\ca'$ is the Bose--Mesner algebra of an association scheme on $X\times (\ZZ/{2\ZZ})^2$
with relations $R_0, R'_1, R_2, R'_3, R_4$,
where
\begin{align*}
R'_1 =& \{((a,\alpha),(b,\beta)) \mid (\alpha_1,\beta_1)=(0,1), H(a,b)=(-1)^{\alpha_2+\beta_2} \} \\
     &  \cup \{((a,\alpha),(b,\beta)) \mid (\alpha_1,\beta_1)=(1,0), H(b,a)=(-1)^{\alpha_2+\beta_2+1} \}, \\
R'_3 =& \{((a,\alpha),(b,\beta)) \mid (\alpha_1,\beta_1)=(0,1), H(a,b)=(-1)^{\alpha_2+\beta_2+1} \} \\
     &  \cup \{((a,\alpha),(b,\beta)) \mid (\alpha_1,\beta_1)=(1,0), H(b,a)=(-1)^{\alpha_2+\beta_2} \}.
\end{align*}

Let
\begin{align*}
Z_0 &= X \times \{0\}\times\ZZ/{2\ZZ}, \\
Z_1 &= X \times \{1\}\times\ZZ/{2\ZZ}, \\
Z &= Z_0\cup Z_1, \\
R_i^0 &= R_i\cap(Z_0\times Z), \\
R_i^1 &= R_i\cap(Z_1\times Z).
\end{align*}
Let
\[
\cR=\{R_i^0 \mid i=0,\ldots,4\} \cup \{R_i^1 \mid i=0,\ldots,4\}.
\]
Then $\cR$ is a coherent configuration in the sense of \cite{H}.
Let $R'_1=R_1^0 \cup R_3^1$, $R'_3=R_1^1 \cup R_3^0$.
Then
\[
R_i^{\lambda}R_j^{\mu}=
\delta_{i+\lambda \bmod{2},\mu} \sum_{k\equiv i+j \pmod{2}} p_{ij}^k R_k^{\lambda}.
\] 
It follows that the permutation $\rho$ of $\cR$ defined by
\[
\rho(R_i^{\delta})=
\left\{
  \begin{array}{ll}
  R_3^{1-\delta} & \mbox{ if \ $i=1$, } \\
  R_1^{1-\delta} & \mbox{ if \ $i=3$, } \\
  R_i^{1-\delta} & \mbox{ otherwise } \\
  \end{array}
\right.
\]
is an algebraic automorphism of the coherent configuration $\cR$
in the sense of \cite{KMPWZ}.
Since the relations $\cR'=\{R_0, R'_1, R_2, R'_3, R_4\}$ are obtained by
fusing $\rho$-orbits,
the fact that $\cR'$ forms an association scheme follows also 
from the general theory given in \cite[Subsection 2.6]{KMPWZ}.

\section{Symmetric and nonsymmetric Hadamard models}

Throughout this section, we assume that $k$ is an integer with $k\geq4$.
Let $u$ be a complex number satisfying
\begin{equation}        \label{potts}
k=(u^2+u^{-2})^2
\end{equation}
A Potts model $A\in\mats$ is defined as
\begin{equation}  \label{pottsA}
   A=u^3I-u^{-1}(J-I).
\end{equation}

Let $H\in\matX$ be a Hadamard matrix of order $k$.
In \cite{N:94}, \cite{JN},
the following two spin models are given:
\begin{align}
W&=
\begin{bmatrix}
  A & A & \omega H & -\omega H \\
  A & A & -\omega H & \omega H \\
  \omega H^T & -\omega H^T & A & A \\
  -\omega H^T & \omega H^T & A & A
\end{bmatrix}  \label{shsm} \\
&= u^3 A_0+\omega A_1-u^{-1}A_2-\omega A_3+u^3 A_4, \notag \\
W'&=
\begin{bmatrix}
  A & A & \xi H & -\xi H \\
  A & A & -\xi H & \xi H \\
  -\xi H^T & \xi H^T & A & A \\
  \xi H^T & -\xi H^T & A & A
\end{bmatrix}  \label{nshsm} \\
&= u^3 A_0+\xi A'_1-u^{-1}A_2-\xi A'_3+u^3 A_4, \notag
\end{align}
where $\omega$ is a $4$-th root of unity,
$\xi$ is a primitive $8$-th root of unity, respectively.
We index the rows and columns of the matrices (\ref{shsm}), (\ref{nshsm})
by $X\times (\ZZ/{2\ZZ})^2$ as in Section 3.
The spin models (\ref{shsm}) and (\ref{nshsm}) are called a Hadamard model 
and a nonsymmetric Hadamard model, respectively.
From \cite[Subsection 5.5]{JMN} we have
\begin{equation}\label{NW}
{\cN}(W)=\ca.
\end{equation}

In order to determine the Nomura algebra ${\cN}(W')$,
we consider the normalized version of the matrices (\ref{shsm}), (\ref{nshsm}):
\begin{align}
\tilde{W}&=
\begin{bmatrix}
  A & A & H & -H \\
  A & A & -H & H \\
  H^T & -H^T & A & A \\
  -H^T & H^T & A & A
\end{bmatrix}  \label{shsm-2}, \\
\tilde{W}'&=
\begin{bmatrix}
  A & A & H & -H \\
  A & A & -H & H \\
  i H^T & -i H^T & A & A \\
  -i H^T & i H^T & A & A
\end{bmatrix},  \label{nshsm-2}
\end{align}
where $i=-\xi^2$ is a primitive $4$th root of unity.
Then $\cN(W)=\cN(\tilde{W})$ and $\cN(W')=\cN(\tilde{W}')$,
since
\[
\tilde{W}=
\begin{bmatrix}
 I & & &0 \\
 & I & & \\
 & & \omega I & \\
0& & & \omega I
\end{bmatrix} W
\begin{bmatrix}
 I & & &0 \\
 & I & & \\
 & & \omega^{-1} I & \\
0& & & \omega^{-1} I
\end{bmatrix}
\]
if $\omega^2=1$,
\[
\tilde{W}=
\begin{bmatrix}
 I & & &0 \\
 & I & & \\
 & & \omega I & \\
0& & & \omega I
\end{bmatrix} W
\begin{bmatrix}
  & I & &0 \\
I &  & & \\
 & & \omega^{-1} I & \\
0 & & & \omega^{-1} I
\end{bmatrix}
\]
if $\omega^2=-1$, and
\[
\tilde{W}'=
\begin{bmatrix}
 I & & &0 \\
 & I & & \\
 & & \xi I & \\
0& & & \xi I
\end{bmatrix} W'
\begin{bmatrix}
 I & & &0 \\
 & I & & \\
 & & \xi^{-1} I & \\
0& & & \xi^{-1} I
\end{bmatrix}
\]
(see \cite[Propositions 2 and 3]{JMN}).

Define column vectors 
$Y^{\alpha\beta}_{ab}$, ${Y'}^{\alpha\beta}_{ab}$ whose $x$-entries are given by
\begin{align}
Y^{\alpha\beta}_{ab}(x)&=\frac{\tilde{W}(x,(a,\alpha))}{\tilde{W}(x,(b,\beta))}, \label{yy1} \\
{Y'}^{\alpha\beta}_{ab}(x)&=\frac{\tilde{W}'(x,(a,\alpha))}{\tilde{W}'(x,(b,\beta))} \label{yy2}
\end{align}
for $(a,\alpha), (b,\beta) \in X\times (\ZZ/{2\ZZ})^2$, $x \in X\times(\ZZ/{2\ZZ})^2$.

\begin{lem} \label{lem:0807-1}
Let $G$ and $G'$ be the graphs with the same vertex set $(X\times (\ZZ/{2\ZZ})^2)^2$,
where two distinct vertices $((a,\alpha),(b,\beta)), ((a',\alpha'),(b',\beta'))$ are adjacent whenever
\begin{align*}
\langle Y_{ab}^{\alpha\beta}, Y_{a'b'}^{\alpha'\beta'} \rangle & \neq 0, \\
\langle {Y'}_{ab}^{\alpha\beta}, {Y'}_{a'b'}^{\alpha'\beta'} \rangle & \neq 0,
\end{align*}
respectively.
Let $K_j \ (j=1,\ldots,p)$ (resp.\ $K'_j \ (j=1,\ldots,p')$)
be the connected components of $G$ (resp.\ $G'$).
Then ${\cN}(W)$ (resp.\ ${\cN}(W')$) is spanned by $\{A(K_j) \mid j=1,\ldots,p \}$
(resp.\ $\{A(K'_j) \mid j=1,\ldots,p'\}$).
\end{lem}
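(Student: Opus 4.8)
The plan is to obtain the lemma as a consequence of Theorem~\ref{thm-0807}; the only thing needing care is that that theorem produces $\cN(W^T)$, not $\cN(W)$. By (\ref{yy1}) and (\ref{yy2}), the vectors $Y^{\alpha\beta}_{ab}$ (resp.\ ${Y'}^{\alpha\beta}_{ab}$) are exactly the vectors ``$Y_{(a,\alpha),(b,\beta)}$'' that Section~2 attaches to the matrix $\tilde W$ (resp.\ $\tilde W'$), with $X\times(\ZZ/2\ZZ)^2$ in the role of the index set. Here $\tilde W$ and $\tilde W'$ are type~II matrices, since $W$ and $W'$ are spin models and the diagonal/permutation conjugations displayed just before the lemma preserve the type~II condition. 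Hence the graphs $G$ and $G'$ of the lemma coincide with the graphs ``$G$'' of Section~2 built from $\tilde W$ and $\tilde W'$, and Theorem~\ref{thm-0807} gives that $\{A(K_j)\mid j=1,\ldots,p\}$ is a basis of $\cN(\tilde W^T)$ and $\{A(K'_j)\mid j=1,\ldots,p'\}$ is a basis of $\cN((\tilde W')^T)$. Since $\cN(W)=\cN(\tilde W)$ and $\cN(W')=\cN(\tilde W')$ are recorded before the lemma, it remains to prove
\[
\cN(\tilde W^T)=\cN(\tilde W)\qquad\text{and}\qquad\cN((\tilde W')^T)=\cN(\tilde W').
\]

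The first identity is immediate: in the block form (\ref{shsm-2}) the $(i,j)$ block of $\tilde W$ is the transpose of its $(j,i)$ block (using $A^T=A$), so $\tilde W$ is symmetric. The matrix $\tilde W'$ is not symmetric, but a block computation shows
\[
(\tilde W')^T=D\,\tilde W'\,D^{-1},\qquad D=\mathrm{diag}(I,I,-iI,-iI),
\]
where $D$ is diagonal in the ordering of $X\times(\ZZ/2\ZZ)^2$ fixed in Section~3, equal to $-i$ on the rows and columns indexed by $\alpha_1=1$ and to $1$ elsewhere. Conjugation by an invertible diagonal matrix leaves the Nomura algebra unchanged --- this is the diagonal instance of \cite[Propositions 2 and 3]{JMN}, already used above to pass from $W$ to $\tilde W$ --- so $\cN((\tilde W')^T)=\cN(D\,\tilde W'\,D^{-1})=\cN(\tilde W')$. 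Combining, $\cN(W)$ is spanned by $\{A(K_j)\}$ and $\cN(W')$ is spanned by $\{A(K'_j)\}$.

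The only step here that is not purely formal is the identity $(\tilde W')^T=D\tilde W'D^{-1}$, and even this is a routine block-by-block check: transposing an off-diagonal block turns $\pm H$ into $\pm H^T$ and $\pm iH^T$ into $\pm iH$, and the powers of $i$ thereby produced are precisely those absorbed by the factors $-i$ that $D$ carries on the $\alpha_1=1$ rows and columns, while the $A$-blocks and all the signs behave symmetrically under $(i,j)\leftrightarrow(j,i)$ and cause no trouble. So I do not anticipate any real obstacle beyond pinning down this diagonal correction $D$ correctly.
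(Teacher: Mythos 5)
Your proposal is correct and follows essentially the same route as the paper: reduce to Theorem~\ref{thm-0807} applied to the normalized matrices, use the symmetry of $\tilde W$ for $\cN(W)$, and dispose of the transpose issue for $\tilde W'$ by a diagonal scaling argument via \cite[Proposition 2]{JMN}. The only (cosmetic) difference is that the paper writes $\tilde W'^T$ directly as $\mathrm{diag}(I_{2k},-\xi^{-1}I_{2k})\,W'\,\mathrm{diag}(I_{2k},-\xi I_{2k})$, whereas you express $(\tilde W')^T$ as the diagonal conjugate $D\tilde W'D^{-1}$ of $\tilde W'$; since $-i\cdot\xi=\xi^3=-\xi^{-1}$, these are the same identity, and your verification of it is correct.
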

\begin{proof}
By \cite[Section 5.5]{JMN}, (\ref{NW}) holds regardless of the value of $\omega$,
so we have ${\cN}(W)={\cN}(\tilde{W})$.
Since $\tilde{W}=\tilde{W}^T$,
the result for ${\cN}(W)$ follows immediately from Theorem~\ref{thm-0807}.

Since
\[
{\tilde{W}}'^T=
\begin{bmatrix}
I_{2k}&0 \\
0&-\xi^{-1}I_{2k}
\end{bmatrix} W'\begin{bmatrix}
I_{2k}&0 \\
0&-\xi I_{2k}
\end{bmatrix},
\]
we have ${\cN}(W')={\cN}({\tilde{W}}'^T)$ by \cite[Proposition 2]{JMN}.
Thus, the result for ${\cN}(W')$ follows also from Theorem~\ref{thm-0807}.
\end{proof}

Let
\[
D=\begin{bmatrix}
I&&&0 \\
&I&& \\
&&iI& \\
0&&&iI
\end{bmatrix}\in {\textsl{Mat}_{X\times (\ZZ/{2\ZZ})^2}(\CC)}.
\]

\begin{lem} \label{lem:0807-2}
For $(a,\alpha), (b,\beta) \in X\times (\ZZ/{2\ZZ})^2$,
\begin{equation*}
{Y'}_{ab}^{\alpha\beta}=
\left\{
  \begin{array}{ll}
  Y_{ab}^{\alpha\beta}  & \mbox{ if \ $\alpha_1=\beta_1$, } \\
  DY_{ab}^{\alpha\beta} & \mbox{ if \ $(\alpha_1,\beta_1)=(0,1)$, } \\
  D^{-1}Y_{ab}^{\alpha\beta} & \mbox{ if \ $(\alpha_1,\beta_1)=(1,0)$. }
  \end{array}
\right.
\end{equation*}
\end{lem}
\begin{proof}
Immediate from the definitions (\ref{shsm-2}), (\ref{nshsm-2}), (\ref{yy1}) and (\ref{yy2}).
\end{proof}

\begin{lem} \label{lem:0807-3}
Let $\tau$ denote the permutation of $(\ZZ/{2\ZZ})^2$ defined by
\[
\tau(\alpha_1, \alpha_2)=(\alpha_1,\alpha_1+\alpha_2),
\]
and let $\sigma$ denote the permutation of $(X\times (\ZZ/{2\ZZ})^2)^2$ defined by
\[
\sigma((a,\alpha), (b,\beta))=((a,\tau(\alpha)), (b,\beta)).
\]
Then $\sigma(R_1)=R'_1$ and $\sigma(R_3)=R'_3$.
\end{lem}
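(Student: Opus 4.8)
The plan is to read off the identity directly from the definitions of $R_1,R_3,R_1',R_3'$ by unwinding the action of $\sigma$ on a typical pair and checking the defining conditions. First I would record two bookkeeping facts that organize the computation. Since $\tau(\alpha_1,\alpha_2)=(\alpha_1,\alpha_1+\alpha_2)$ fixes the first coordinate and satisfies $\tau^2=\mathrm{id}$, the map $\sigma$ is an involution of $(X\times(\ZZ/2\ZZ)^2)^2$ which preserves the set $M=\{((a,\alpha),(b,\beta))\mid \alpha_1\neq\beta_1\}$ of ``mixed'' pairs. Moreover, $R_0,R_2,R_4$ consist only of pairs with $\alpha_1=\beta_1$ while $R_1,R_3$ (and likewise $R_1',R_3'$) consist only of pairs with $\alpha_1\neq\beta_1$; since $\{R_0,\dots,R_4\}$ partitions all pairs, this forces $R_1\sqcup R_3=M=R_1'\sqcup R_3'$, the unions being disjoint because for each fixed $(\alpha_1,\beta_1)\in\{(0,1),(1,0)\}$ the sign $H(a,b)$ (resp.\ $H(b,a)$) equals exactly one of $(-1)^{\alpha_2+\beta_2}$, $(-1)^{\alpha_2+\beta_2+1}$. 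Consequently it suffices to prove $\sigma(R_1)=R_1'$, for then $\sigma(R_3)=\sigma(M\setminus R_1)=M\setminus R_1'=R_3'$.

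To show $\sigma(R_1)\subseteq R_1'$ I would take $((a,\alpha),(b,\beta))\in R_1$ and split on $(\alpha_1,\beta_1)$. If $(\alpha_1,\beta_1)=(0,1)$ then $\tau(\alpha)=(0,\alpha_2)$, so the second coordinate is unchanged and the defining condition $H(a,b)=(-1)^{\alpha_2+\beta_2}$ is literally the $(0,1)$-clause of $R_1'$ for the image $((a,\tau(\alpha)),(b,\beta))$. If $(\alpha_1,\beta_1)=(1,0)$ then $\tau(\alpha)=(1,\alpha_2+1)$; writing $\gamma:=\alpha_2+1$ for the new second coordinate we have $\alpha_2=\gamma+1$ in $\ZZ/2\ZZ$, and substituting into $H(b,a)=(-1)^{\alpha_2+\beta_2}$ gives $H(b,a)=(-1)^{\gamma+\beta_2+1}$, which is precisely the $(1,0)$-clause of $R_1'$. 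Hence $\sigma(R_1)\subseteq R_1'$. Running the identical two-case computation starting from a pair in $R_1'$ yields $\sigma(R_1')\subseteq R_1$, and applying the involution $\sigma$ to this inclusion gives $R_1'\subseteq\sigma(R_1)$; therefore $\sigma(R_1)=R_1'$, and the case of $R_3$ follows as above.

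There is no genuine obstacle: the argument is a finite check. The only point needing care is the $\ZZ/2\ZZ$-arithmetic in the exponents together with keeping the two cases $(\alpha_1,\beta_1)=(0,1)$ and $(\alpha_1,\beta_1)=(1,0)$ apart — the asymmetry between $R_1$ and $R_1'$ (they agree on the $(0,1)$-part and differ by a sign on the $(1,0)$-part) is exactly matched by the fact that $\tau$ acts trivially on the second coordinate when $\alpha_1=0$ and by the shift $\alpha_2\mapsto\alpha_2+1$ when $\alpha_1=1$. If one prefers to avoid the involution step, one can instead observe that $\sigma$ is a bijection and that $R_1,R_1'$ contain the same number of pairs, since $A_1$ and $A_1'$ are adjacency matrices of regular digraphs of the same valency $k$; then the single inclusion $\sigma(R_1)\subseteq R_1'$ already forces equality.
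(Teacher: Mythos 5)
Your proof is correct and is exactly the direct verification from the definitions that the paper itself invokes (its proof of this lemma is just ``Immediate from the definitions of $R_1$, $R_3$, $R'_1$ and $R'_3$''); your two-case check of the $(0,1)$ and $(1,0)$ clauses, together with the observation that $\tau$ is an involution fixing $\alpha_1$, fills in precisely those details. No issues.
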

\begin{proof}
Immediate from the definitions of $R_1$, $R_3$, $R'_1$ and $R'_3$.
\end{proof}

\begin{lem} \label{lem:0807-4}
For $(a,\alpha), (b,\beta) \in X\times (\ZZ/{2\ZZ})^2$,
\[
Y_{ab}^{\tau(\alpha)\beta}=(-D^2)^{\alpha_1} Y_{ab}^{\alpha\beta}.
\]
\end{lem}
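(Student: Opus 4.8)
The plan is to reduce the asserted vector identity to an entrywise comparison of numerators. For each $x\in X\times (\ZZ/{2\ZZ})^2$ the definition (\ref{yy1}) gives
\[
\frac{Y_{ab}^{\tau(\alpha)\beta}(x)}{Y_{ab}^{\alpha\beta}(x)}=\frac{\tilde{W}(x,(a,\tau(\alpha)))}{\tilde{W}(x,(a,\alpha))},
\]
since the common denominator $\tilde{W}(x,(b,\beta))$ (nonzero because every entry of $\tilde{W}$ is a nonzero entry of $A$ or $\pm H$) cancels. So it suffices to evaluate this last ratio at every $x$ and check that it equals the $(x,x)$-entry of $(-D^2)^{\alpha_1}$.

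First I would dispose of the case $\alpha_1=0$: then $\tau(\alpha)=\alpha$ by the definition of $\tau$, the ratio is $1$, and $(-D^2)^{\alpha_1}=I$, so the identity is trivial. Assume therefore $\alpha_1=1$, so that $\tau(\alpha)=(1,\alpha_2+1)$ differs from $\alpha$ only in its second coordinate and the columns $(a,\alpha),(a,\tau(\alpha))$ both lie in the last two block-columns of $\tilde{W}$. Now split according to the first coordinate $\gamma_1$ of $x=(x_0,\gamma)$, reading off entries from the block form (\ref{shsm-2}) and the indexing order $X\times\{(0,0)\},X\times\{(0,1)\},X\times\{(1,0)\},X\times\{(1,1)\}$ fixed in Section~3. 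If $\gamma_1=1$, both numerators lie in an $A$-block and equal $A(x_0,a)$, so the ratio is $1$. If $\gamma_1=0$, both numerators equal $\pm H(x_0,a)$, the sign being $(-1)^{\gamma_2+\alpha_2}$ for $(a,\alpha)$ and $(-1)^{\gamma_2+\alpha_2+1}$ for $(a,\tau(\alpha))$, so the ratio is $-1$.

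Finally I would match this with $D$: by definition $D$ has $(x,x)$-entry $i^{\gamma_1}$, hence $-D^2$ has $(x,x)$-entry $-(-1)^{\gamma_1}$, which is $-1$ when $\gamma_1=0$ and $+1$ when $\gamma_1=1$ — exactly the ratio just computed. Thus $Y_{ab}^{\tau(\alpha)\beta}=-D^2\,Y_{ab}^{\alpha\beta}=(-D^2)^{\alpha_1}Y_{ab}^{\alpha\beta}$ in this case, completing the proof.

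I do not expect any genuine obstacle here; the proof is pure bookkeeping. The one point requiring care is matching the block positions of $\tilde{W}$ to the coordinate labels and tracking the signs $(-1)^{\alpha_2}$ and $(-1)^{\gamma_2}$ consistently; checking one or two explicit block positions (for instance, row block $(0,1)$ against column block $(1,0)$, where the entry is $-H$) is enough to pin down the sign convention used above.
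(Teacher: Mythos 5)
Your proof is correct and is essentially the paper's argument: the paper likewise disposes of $\alpha_1=0$ via $\tau(\alpha)=\alpha$ and, for $\alpha_1=1$, observes that the $(a,\alpha)$- and $(a,\tau(\alpha))$-columns of $\tilde{W}$ differ by left multiplication by $-D^2$, which is exactly the entrywise sign check you carry out. Your version just makes the block-by-block bookkeeping explicit.
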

\begin{proof}
Since $\tau(\alpha)=\alpha$ when $\alpha_1=0$,
the result holds in this case.
If $\alpha_1=1$, then
$\tau(\alpha)=(1,1+\alpha_2)$.
Since the $(a,\alpha)$-column and $(a,\tau(\alpha))$-column of $\tilde{W}$
differ by the left multiplication by $-D^2$,
the results holds in this case as well.
\end{proof}

\begin{lem} \label{lem:0807-5}
If $((a,\alpha), (b,\beta)), ((a',\alpha'), (b',\beta'))\in R_1\cup R_3$,
then
\[
\langle {Y'}_{ab}^{\tau(\alpha)\beta}, {Y'}_{a'b'}^{\tau(\alpha')\beta'} \rangle
=(-1)^{\alpha_1+\alpha'_1} \langle Y_{ab}^{\alpha\beta}, Y_{a'b'}^{\alpha'\beta'} \rangle.
\]
\end{lem}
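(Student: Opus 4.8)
The plan is to reduce the claimed Hermitian-inner-product identity to the pointwise comparisons already packaged in Lemmas~\ref{lem:0807-2} and~\ref{lem:0807-4}, and then to track how the scalars $D$, $D^2$ and $-1$ interact under the Hermitian product. First I would apply Lemma~\ref{lem:0807-4} to rewrite each side: the left-hand side becomes
$(-D^2)^{\alpha_1}\overline{(-D^2)^{\alpha'_1}}\langle {Y'}_{ab}^{\alpha\beta}, {Y'}_{a'b'}^{\alpha'\beta'}\rangle$ after pulling the matrix $-D^2$ out of each slot — here I use that $D$ is diagonal with entries in $\{1,i\}$, so $-D^2$ is a diagonal $\pm1$ matrix and in particular Hermitian, real, and commuting with everything; thus $(-D^2)^{\alpha_1}$ and its conjugate-transpose coincide, and $(-D^2)^{\alpha_1}(-D^2)^{\alpha'_1}$ is scalar on the relevant eigenspace only if $\alpha_1=\alpha'_1$, which is \emph{not} assumed. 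So the cleaner route is to keep the $-D^2$ factors as operators and carry them through, using $(-D^2)^\ast = -D^2$ and $(-D^2)^2 = I$.

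Next, the key case analysis is on $(\alpha_1,\beta_1)$ and $(\alpha'_1,\beta'_1)$: since $((a,\alpha),(b,\beta))\in R_1\cup R_3$, Lemma~\ref{lem:0807-2} gives ${Y'}_{ab}^{\alpha\beta} = D^{\pm1} Y_{ab}^{\alpha\beta}$ with the sign determined by whether $(\alpha_1,\beta_1)=(0,1)$ or $(1,0)$; similarly for the primed triple. So $\langle {Y'}_{ab}^{\alpha\beta}, {Y'}_{a'b'}^{\alpha'\beta'}\rangle = \langle D^{\pm1}Y_{ab}^{\alpha\beta}, D^{\pm1}Y_{a'b'}^{\alpha'\beta'}\rangle$. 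Combining with the $-D^2$ factors from Lemma~\ref{lem:0807-4}, both sides of the desired identity become inner products of $Y_{ab}^{\alpha\beta}$ and $Y_{a'b'}^{\alpha'\beta'}$ sandwiched by a product of diagonal matrices of the form $(-D^2)^{\alpha_1}$, $D^{\pm1}$, and their adjoints; I would then simplify that diagonal product using $D^\ast = \overline{D}$ (entrywise conjugate, since $D$ is diagonal), $D\overline{D} = D D^{-1}\cdot(\text{diagonal } \pm1\text{ correction})$ — more precisely $D^2 = \mathrm{diag}(1,1,-1,-1)$ so $\overline{D} = D^{-1}\cdot(\text{that diagonal})$ — and reduce everything to a single scalar times the $\alpha_1=\beta_1$ form of $D$ acting trivially. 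The bookkeeping has to produce exactly the factor $(-1)^{\alpha_1+\alpha'_1}$ and nothing else.

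The main obstacle is precisely this last simplification: one must verify that in \emph{every} admissible combination of the four cases (both triples in $R_1$ gives $(\alpha_1,\beta_1)=(\alpha'_1,\beta'_1)=(0,1)$; both in $R_3$ likewise; one in $R_1^0$-type and one in $R_1^1$-type; etc., recalling $R_1,R_3$ each split into an $(\alpha_1,\beta_1)=(0,1)$ part and a $(1,0)$ part), the accumulated diagonal scalar from $D$, $D^{-1}$, $-D^2$ and their conjugates collapses to $(-1)^{\alpha_1+\alpha'_1}$ on the support of the vectors involved. Since $D^2$ is $+1$ on the first two blocks and $-1$ on the last two, and the vectors $Y_{ab}^{\alpha\beta}$ are indexed by $x\in X\times(\ZZ/2\ZZ)^2$ which ranges over all four blocks, I expect the resolution to hinge on the observation that $\langle D u, D v\rangle$ and $\langle D^{-1}u, D^{-1}v\rangle$ both equal $\langle u,v\rangle$ when $D$ is unitary — and $D$ \emph{is} unitary since its diagonal entries have modulus $1$. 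That observation makes the $D^{\pm1}$ factors from Lemma~\ref{lem:0807-2} disappear outright, leaving only the $(-D^2)^{\alpha_1}$ and $(-D^2)^{\alpha'_1}$ from Lemma~\ref{lem:0807-4}; and since $(-D^2)$ is unitary too, $\langle (-D^2)^{\alpha_1}u,(-D^2)^{\alpha'_1}v\rangle = \langle u, (-D^2)^{\alpha_1+\alpha'_1}v\rangle$, which is $(-1)^{\alpha_1+\alpha'_1}$ times $\langle u,v\rangle$ only if $(-D^2)^{\alpha_1+\alpha'_1}$ acts as the scalar $(-1)^{\alpha_1+\alpha'_1}$ on the vector $Y_{a'b'}^{\alpha'\beta'}$ — but it acts as $(-1)^{(\alpha_1+\alpha'_1)}$ times the diagonal block sign, so one more identity is needed: the entries of $Y_{a'b'}^{\alpha'\beta'}$ supported on blocks $3,4$ versus $1,2$ must be related by exactly the sign that makes $D^2$ act as $+1$ overall, or the relation $R_1\cup R_3$ must force the two vectors to be supported compatibly. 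I would pin this down by writing out $Y_{ab}^{\alpha\beta}(x)$ explicitly from $\tilde{W}$ for $x$ in each block and checking the sign directly; this is routine but is the one place the argument could go wrong if the blocks are mishandled.
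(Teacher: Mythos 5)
Your overall strategy --- reduce to $Y$ via Lemmas \ref{lem:0807-2} and \ref{lem:0807-4} and track the diagonal factors case by case --- is the same as the paper's, but the execution has a genuine gap at exactly the point that carries the content of the lemma. You claim that unitarity of $D$ makes the $D^{\pm1}$ factors from Lemma~\ref{lem:0807-2} ``disappear outright.'' That is true only when both pairs receive the \emph{same} power of $D$, i.e.\ when $(\alpha_1,\beta_1)=(\alpha'_1,\beta'_1)$, which for pairs in $R_1\cup R_3$ is exactly the case $\alpha_1=\alpha'_1$ where the asserted factor is $+1$ anyway. In the mixed case one vector gets $D$ and the other $D^{-1}$, and then
$\langle Du, D^{-1}v\rangle=\sum_x D(x,x)\,\overline{D(x,x)^{-1}}\,u(x)\overline{v(x)}=\langle D^2u,v\rangle$,
because $\overline{D(x,x)^{-1}}=D(x,x)$ on the blocks where $D$ has entry $i$. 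So a non-scalar $D^2=\mathrm{diag}(I,I,-I,-I)$ survives from Lemma~\ref{lem:0807-2}; it is precisely this factor that multiplies the $(-D^2)^{\alpha_1+\alpha'_1}=-D^2$ coming from Lemma~\ref{lem:0807-4} to give $D^2\cdot(-D^2)=-D^4=-I$, i.e.\ the scalar $(-1)^{\alpha_1+\alpha'_1}=-1$. Because you cancelled the $D^{\pm1}$ factors prematurely, you are left holding only $-D^2$, which is not a scalar, and you are forced to conjecture an extra block-support/sign identity for the entries of $Y_{a'b'}^{\alpha'\beta'}$. No such identity is needed, and the one you would need (vanishing of the block-$3,4$ part of the inner product) is not something that follows readily; the correct bookkeeping is simply to retain the $D^2$.

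A smaller point: Lemma~\ref{lem:0807-4} is stated for $Y$, not for $Y'$, so your opening move of pulling $(-D^2)^{\alpha_1}$ out of ${Y'}_{ab}^{\tau(\alpha)\beta}$ is not licensed as written (the analogous statement for $Y'$ does hold, since columns $3$ and $4$ of $\tilde W'$ are related in the same way as those of $\tilde W$, but you would have to prove it). The clean order is the reverse: since $\tau$ fixes the first coordinate, $\tau(\alpha)_1=\alpha_1$, so Lemma~\ref{lem:0807-2} applies directly to ${Y'}_{ab}^{\tau(\alpha)\beta}$ and yields $D^{\pm1}Y_{ab}^{\tau(\alpha)\beta}$; only then apply Lemma~\ref{lem:0807-4} to replace $Y_{ab}^{\tau(\alpha)\beta}$ by $(-D^2)^{\alpha_1}Y_{ab}^{\alpha\beta}$. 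With that ordering, and with the $D^2$ from the mixed case retained, the four cases collapse to the stated factor $(-1)^{\alpha_1+\alpha'_1}$, which is how the paper's proof proceeds.
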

\begin{proof}
Using Lemmas \ref{lem:0807-2} and \ref{lem:0807-4}, we have
\begin{align*}
& \langle {Y'}_{ab}^{\tau(\alpha)\beta}, {Y'}_{a'b'}^{\tau(\alpha')\beta'} \rangle \\
&=
\left\{
  \begin{array}{ll}
  \langle DY_{ab}^{\tau(\alpha)\beta}, DY_{a'b'}^{\tau(\alpha')\beta'} \rangle  
    & \mbox{ if \ $(\tau(\alpha)_1,\beta_1)=(\tau(\alpha')_1,\beta'_1)=(0,1)$, } \\
  \langle D^{-1}Y_{ab}^{\tau(\alpha)\beta}, D^{-1}Y_{a'b'}^{\tau(\alpha')\beta'} \rangle  
    & \mbox{ if \ $(\tau(\alpha)_1,\beta_1)=(\tau(\alpha')_1,\beta'_1)=(1,0)$, } \\
  \langle DY_{ab}^{\tau(\alpha)\beta}, D^{-1}Y_{a'b'}^{\tau(\alpha')\beta'} \rangle  
    & \mbox{ if \ $(\tau(\alpha)_1,\beta_1)=(\beta'_1,\tau(\alpha')_1)=(0,1)$, }\\
  \langle D^{-1}Y_{ab}^{\tau(\alpha)\beta}, DY_{a'b'}^{\tau(\alpha')\beta'} \rangle  
    & \mbox{ if \ $(\tau(\alpha)_1,\beta_1)=(\beta'_1,\tau(\alpha')_1)=(1,0)$ } 
  \end{array}
\right.\\
&=
\left\{
  \begin{array}{ll}
  \langle Y_{ab}^{\tau(\alpha)\beta}, Y_{a'b'}^{\tau(\alpha')\beta'} \rangle  
    & \mbox{ if \ $(\alpha_1,\beta_1)=(\alpha'_1,\beta'_1)$, } \\
  \langle D^2Y_{ab}^{\tau(\alpha)\beta}, Y_{a'b'}^{\tau(\alpha')\beta'} \rangle  
    & \mbox{ otherwise }
  \end{array}
\right. \\
&=
\left\{
  \begin{array}{ll}
  \langle (-D^2)^{\alpha_1}Y_{ab}^{\alpha\beta}, (-D^2)^{\alpha'_1}Y_{a'b'}^{\alpha'\beta'} \rangle  
    & \mbox{ if \ $\alpha_1=\alpha'_1$, } \\
  \langle D^2(-D^2)^{\alpha_1}Y_{ab}^{\alpha\beta}, (-D^2)^{\alpha'_1}Y_{a'b'}^{\alpha'\beta'} \rangle  
    & \mbox{ otherwise }
  \end{array}
\right. \\
&=
\left\{
  \begin{array}{ll}
  \langle Y_{ab}^{\alpha\beta}, Y_{a'b'}^{\alpha'\beta'} \rangle  
    & \mbox{ if \ $\alpha_1=\alpha'_1$, } \\
  -\langle Y_{ab}^{\alpha\beta}, Y_{a'b'}^{\alpha'\beta'} \rangle 
    & \mbox{ otherwise }
  \end{array}
\right. \\
&=(-1)^{\alpha_1+\alpha'_1}\langle Y_{ab}^{\alpha\beta}, Y_{a'b'}^{\alpha'\beta'} \rangle.
\end{align*}
\end{proof}

\begin{thm} \label{thm:NW'}
The Nomura algebra 
${\cN}(W')$ of the spin model $W'$ coincides with the Bose--Mesner
algebra $\ca'$ of the directed Hadamard graph.
\end{thm}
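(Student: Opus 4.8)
The plan is to compare the Nomura algebras $\cN(W)=\ca$ and $\cN(W')$ by comparing the graphs $G$ and $G'$ of Lemma~\ref{lem:0807-1}, using the relabelling permutation $\sigma$ from Lemma~\ref{lem:0807-3} to transport connected components. Since $\cN(W)=\ca$ has basis $\{A_0,A_1,A_2,A_3,A_4\}$ by~(\ref{NW}), the connected components of $G$ are exactly the relations $R_0,\ldots,R_4$; our goal is to identify the connected components of $G'$ with $R_0,R'_1,R_2,R'_3,R_4$, which is precisely the basis of $\ca'$.

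First I would observe that, by Lemma~\ref{lem:0807-2}, the inner products $\langle {Y'}^{\alpha\beta}_{ab},{Y'}^{\alpha'\beta'}_{a'b'}\rangle$ and $\langle Y^{\alpha\beta}_{ab},Y^{\alpha'\beta'}_{a'b'}\rangle$ coincide whenever the vertices $((a,\alpha),(b,\beta))$ and $((a',\alpha'),(b',\beta'))$ lie outside $R_1\cup R_3\cup R'_1\cup R'_3$ — that is, on the "symmetric part" $R_0\cup R_2\cup R_4$, the matrices $\tilde W$ and $\tilde W'$ have identical columns, so $G$ and $G'$ agree there and contribute the same components $R_0,R_2,R_4$ (or rather, their subgraphs induced on $R_0\cup R_2\cup R_4$ are identical, and one checks these are unions of components in both graphs). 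It remains to understand the induced subgraphs of $G$ and $G'$ on $R_1\cup R_3$, respectively on $R'_1\cup R'_3$. Here I apply $\sigma$: by Lemma~\ref{lem:0807-3}, $\sigma$ maps $R_1\cup R_3$ bijectively onto $R'_1\cup R'_3$, sending $((a,\alpha),(b,\beta))$ to $((a,\tau(\alpha)),(b,\beta))$, and Lemma~\ref{lem:0807-5} shows that $\langle {Y'}^{\tau(\alpha)\beta}_{ab},{Y'}^{\tau(\alpha')\beta'}_{a'b'}\rangle=(-1)^{\alpha_1+\alpha'_1}\langle Y^{\alpha\beta}_{ab},Y^{\alpha'\beta'}_{a'b'}\rangle$. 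The sign is irrelevant to the nonvanishing condition defining adjacency, so $\sigma$ restricts to a graph isomorphism between the subgraph of $G$ induced on $R_1\cup R_3$ and the subgraph of $G'$ induced on $R'_1\cup R'_3$.

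Next I would combine the two pieces. Because the columns of $\tilde W$ (resp.\ $\tilde W'$) indexed by $X\times\{(0,*)\}\cup X\times\{(1,*)\}$ split according to $\alpha_1$, one checks directly from the definitions that there are no edges of $G$ (nor of $G'$) joining a vertex of $R_0\cup R_2\cup R_4$ to a vertex of $R_1\cup R_3$ (resp.\ $R'_1\cup R'_3$): indeed $A_i$ for $i\in\{0,2,4\}$ is supported on pairs with $\alpha_1=\beta_1$ while $A_1,A_3$ (and $A'_1,A'_3$) are supported on pairs with $\alpha_1\neq\beta_1$, so $\langle Y^{\alpha\beta}_{ab},Y^{\alpha'\beta'}_{a'b'}\rangle$ with one pair of each type would force a contradiction on the relevant indices. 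Hence $R_1\cup R_3$ is a union of connected components of $G$ and, since these components are exactly $R_1$ and $R_3$ by~(\ref{NW}) and Theorem~\ref{thm-0807}, the subgraph of $G$ on $R_1\cup R_3$ has precisely the two components $R_1,R_3$. Transporting via the graph isomorphism $\sigma$ and using $\sigma(R_1)=R'_1$, $\sigma(R_3)=R'_3$, the subgraph of $G'$ on $R'_1\cup R'_3$ has precisely the two components $R'_1,R'_3$; together with $R_0,R_2,R_4$ this gives the five components $R_0,R'_1,R_2,R'_3,R_4$ of $G'$. By Lemma~\ref{lem:0807-1}, $\cN(W')$ is spanned by $A_0,A'_1,A_2,A'_3,A_4$, which is exactly $\ca'$.

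The main obstacle I anticipate is the "no crossing edges" claim and, relatedly, being careful that the induced subgraph on $R_1\cup R_3$ really is a disjoint union of whole components rather than merely containing the components $R_1,R_3$ — one must rule out an edge of $G$ between $R_1\cup R_3$ and $R_0\cup R_2\cup R_4$. This reduces to a short computation with the block structure: for a pair in $R_i$ with $i\in\{0,2,4\}$ and a pair in $R_j$ with $j\in\{1,3\}$, the vectors $Y$ involved are supported on disjoint halves of the index set $X\times(\ZZ/2\ZZ)^2$ determined by the $\alpha_1$-coordinate, forcing the Hermitian inner product to vanish. Once that bookkeeping is done, everything else is assembled from the already-proved Lemmas~\ref{lem:0807-1}--\ref{lem:0807-5} together with~(\ref{NW}).
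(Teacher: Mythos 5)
There is a genuine gap at the ``no crossing edges'' step, and it is exactly the step the proof cannot do without. You correctly reduce everything to showing that $G'$ has no edge joining a vertex of $R'_1\cup R'_3$ (the pairs with $\alpha_1\neq\beta_1$) to a vertex of $R_0\cup R_2\cup R_4$ (the pairs with $\alpha_1=\beta_1$), but the justification you offer --- that the relevant vectors are ``supported on disjoint halves of the index set'' --- is false. Since $\tilde{W}'$ is a type~II matrix, all of its entries are nonzero, so every vector ${Y'}^{\alpha\beta}_{ab}$ is nowhere zero; the supports of the matrices $A_i$ describe which relation a \emph{pair of vertices} belongs to and say nothing about where the vectors ${Y'}^{\alpha\beta}_{ab}$ vanish. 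For $G$ the desired vanishing of the crossing inner products does hold, but only as a consequence of $\cN(W)=\ca$ (which you also invoke), not as a pointwise support phenomenon. And it does not transfer to $G'$: by Lemma~\ref{lem:0807-2} a crossing inner product in $G'$ equals $\langle D^{\pm1}Y^{\alpha\beta}_{ab},Y^{\alpha'\beta'}_{a'b'}\rangle$, and since $D$ multiplies only the coordinates in $Z_1$ by $i$, the vanishing of $\langle Y^{\alpha\beta}_{ab},Y^{\alpha'\beta'}_{a'b'}\rangle$ (the sum of the two half-inner-products) does not force the vanishing of the twisted sum. So your argument leaves open the possibility that a component of $G'$ meets both halves, which would destroy the identification of the components with $R_0,R'_1,R_2,R'_3,R_4$.

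The paper closes this gap by a different device, which your proof is missing entirely: since $W'$ is a spin model, $W'\in\cN(W')$ by \cite[Proposition 9]{JMN}, and because the values $\xi,-u^{-1},-\xi,u^3$ taken by $W'$ on $R'_1,R_2,R'_3,R_0\cup R_4$ are pairwise distinct, each connected component of $G'$ lies inside a single level set of $W'$; hence $R'_1,R_2,R'_3,R_4$ (and $R_0$) are each unions of connected components of $G'$, which in particular rules out all crossing edges. Combined with the connectedness statements you already have correctly (agreement of $G$ and $G'$ on $R_0\cup R_2\cup R_4$ via Lemma~\ref{lem:0807-2}, and the isomorphism $\sigma$ of Lemmas~\ref{lem:0807-3} and~\ref{lem:0807-5} carrying the components $R_1,R_3$ of $G$ onto $R'_1,R'_3$), this completes the proof; the rest of your argument matches the paper's.
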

\begin{proof}
Since $u^4=1$ or $|u|>1$, the coefficients $\{\xi,-u^{-1},-\xi,u^3\}$
of $W'$ in $A'_1,A_2,A'_3,A_4$ are pairwise distinct. Since
$W'\in \cN(W')$ by \cite[Proposition 9]{JMN}, we obtain
$A'_1,A_2,A'_3,A_4\in\cN(W')$. By Lemma~\ref{lem:0807-1},
this implies that 
each of $R'_1,R_2,R'_3,R_4$ is a union
of connected components of $G'$.

Since $\cN(W)=\ca$, Lemma~\ref{lem:0807-1} implies that
$R_0,R_1,\dots,R_4$ are the connected components of $G$.
Observe
\[
R_0\cup R_2\cup R_4=\{((a,\alpha),(b,\beta))\in (X\times (\ZZ/{2\ZZ})^2)^2 \mid \alpha_1=\beta_1 \}.
\]
By Lemma~\ref{lem:0807-2}, we have
\[
Y_{ab}^{\alpha\beta}={Y'}_{ab}^{\alpha\beta} \quad 
\text{ for \ $((a,\alpha),(b,\beta))\in R_0\cup R_2\cup R_4$}.
\]
This implies that two graphs $G$ and $G'$ have the same set of 
edges on $R_0\cup R_2\cup R_4$.
Thus $R_0$, $R_2$ and $R_4$ are connected.

By Lemmas~\ref{lem:0807-3} and \ref{lem:0807-5}, 
there is an isomorphism $\sigma$ from the subgraph of $G$
induced by $R_1\cup R_3$, to the subgraph of $G'$
induced by $R'_1\cup R'_3$, satisfying
$\sigma(R_1)=R'_1$ and $\sigma(R_3)=R'_3$.
Since $R_1$ and $R_3$ are connected components of $G$, 
$R'_1$ and $R'_3$ are connected.

Therefore, we have shown that $R_0,R'_1,R_2,R'_3,R_4$ are the
connected components of $G'$.
The result now follows from Lemma~\ref{lem:0807-1}.
\end{proof}

\begin{rem}
The condition of Theorem~\ref{thm:NW'} does not hold when $k=1,2$.
A direct calculation shows that ${\cN}(W)={\cN}(W')$ is 
the Bose--Mesner algebra of the group association scheme of $\ZZ/{4\ZZ}$ when $k=1$,
and that ${\cN}(W)\cong {\cN}(W')$ is
the Bose--Mesner algebra of the group association scheme of $\ZZ/{8\ZZ}$ when $k=2$.
\end{rem}

\bigskip\par\noindent
{\bf Acknowledgements.}
The authors would like to thank Mitsugu Hirasaka
for bringing the article \cite{KMPWZ} to the authors' attention,
and an anonymous referee for careful reading of the manuscript.

\ifx\undefined\allcaps\def\allcaps#1{#1}\fi\newcommand{\nop}[1]{}
\providecommand{\bysame}{\leavevmode\hbox to3em{\hrulefill}\thinspace}


\begin{thebibliography}{10}
\bibitem{BCN}
A.E.~Brouwer, A.M.~Cohen and A.~Neumaier,
\emph{Distance-Regular Graphs}, 
Springer-Verlag, Heidelberg, 1989.

\bibitem{BI}
E.~Bannai and T.~Ito, 
\emph{Algebraic Combinatorics I}, 
Benjamin/Cummings, Menlo Park, 1984.

\bibitem{H}
D.G.~Higman, 
\emph{Coherent configurations},
I. Rend. Sem. Mat. Univ. Padova \textbf{44} (1970), 1--25.

\bibitem{J}
F.~Jaeger, 
\emph{Strongly regular graphs and spin models for the Kauffman polynomial},
Geom. Dedicata \textbf{44} (1992), 23--52.

\bibitem{JMN}
F.~Jaeger, M.~Matsumoto, and K.~Nomura, 
\emph{Bose-Mesner algebras related to type~II matrices and spin models},
J. Algebraic Combin. \textbf{8} (1998), 39--72.

\bibitem{JN}
F.~Jaeger and K.~Nomura, 
\emph{Symmetric versus non-symmetric spin models for link invariants},
J. Algebraic Combin. \textbf{10} (1999), 241--278.

\bibitem{Jo:pac}
V.F.R.~Jones,
\emph{On knot invariants related to some statistical mechanical models},
Pacific J. Math. {\bf 137} (1989), 311--336.

\bibitem{KMPWZ}
M.~Klin, M.~Muzychuk, C.~Pech, A.~Woldar, and P.-H.~Zieschang,
\emph{Association schemes on $28$ 
points as mergings of a half-homogeneous coherent configuration},
European J. Combin. \textbf{28} (2007), 1994--2025.

\bibitem{N:94}
K.~Nomura,
\emph{Spin models constructed from Hadamard matrices},
J. Combin. Theory Ser. A \textbf{68} (1994), 251--261.

\bibitem{N:97}
K.~Nomura,
\emph{An algebra associated with a spin model},
J. Algebraic Combin. \textbf{6} (1997), 53--58.
\end{thebibliography}
\end{document}